\newtheorem{theorem}{Theorem}[section]
\newtheorem{corollary}{Corollary}
\newtheorem{lemma}[theorem]{Lemma}
\theoremstyle{definition}
\newtheorem{remark}{Remark}
\newcommand{\RR}{\mathbb{R}} 
\renewcommand{\AA}{\mathbb{A}}
\newcommand{\BB}{\mathbb{B}}
\newcommand{\Circle}{\mathbb{S}^{1}} 
\newcommand{\DiffS}{\mathrm{Diff}^{\infty}(\mathbb{S}^{1})} 
\newcommand{\CS}{\mathrm{C}^{\infty}(\mathbb{S}^{1})} 
\newcommand{\GS}{C^{\infty}G} 
\newcommand{\gs}{C^{\infty}\mathfrak{g}} 
\newcommand{\HH}[1]{H^{#1}(\mathbb{S}^{1})}
\newcommand{\Ss}[1]{W^{k,p}(\mathbb{S}^1)}
\newcommand{\norm}[1]{\left\Vert#1\right\Vert}
\newcommand{\abs}[1]{\left\vert#1\right\vert}
\newcommand{\deq}{\stackrel{\mathrm{def}}{=}}
\DeclareMathOperator{\ad}{ad} %
\title[Higher order Camassa-Holm equations on fractional Sobolev spaces ]
{Two-component higher order Camassa-Holm systems with fractional inertia operator: a geometric approach}
\author[Joachim Escher and Tony Lyons]{}
\subjclass[2010]{22E65, 58D05, 35Q53.}
 \keywords{Diffeomorphism group, geodesic flow, global solutions.}
\email{escher@ifam.uni-hannover.de}
\email{tlyons@ucc.ie}
\thanks{TL was supported by the Irish Research Council, Government of Ireland Postdoctoral Fellowship GOIPD/2014/34}
\begin{document}
\maketitle

\centerline{\scshape Joachim Escher}
\medskip
{\footnotesize
 \centerline{Institute for Applied Mathematics}
   \centerline{University of Hanover}
   \centerline{ D-30167 Hanover, Germany}
} 

\medskip

\centerline{\scshape Tony Lyons}
\medskip
{\footnotesize
 \centerline{School of Mathematical Sciences}
   \centerline{University College Cork}
   \centerline{Cork, Ireland}
}

\begin{abstract}
In the following we study  the qualitative properties of solutions
to the geodesic flow induced by a higher order two-component Camassa-Holm system.
In particular, criteria to ensure the existence of temporally global solutions are presented. Moreover
in the metric case, and for inertia operators of order higher than three, the flow is shown
to be geodesically complete.
\end{abstract}

\section{Introduction}\label{Sec1}
In this paper we will investigate a generalised two-component Camassa-Holm equation with fractional order inertia operator, given by the system:
\begin{equation}\label{s1:eq1}
\begin{split}
	&m_t = \alpha u_x - au_xm-um_x - \kappa\rho\rho_x,\quad m = Au\\
	&\rho_t = -u\rho_x - (a-1)u_x\rho,\qquad a\in \RR\setminus \{1\}\\
    &\alpha_t = 0
\end{split}
\end{equation}
where the inertia operator $A=(1-D^2)^s$ belongs to the class of fractional Sobolev norms with $s>1$ and $D=\frac{d}{dx}$, and where $u$ and $\rho$ are defined on the circle $\Circle$ and $\alpha$ is a constant. The meaning of these (dependent) variables will be explained later on. Finally $a$ and $\kappa$ are real parameters. In Section \ref{Sec2} it will be shown that the system in equation \eqref{s1:eq1} corresponds to a metric induced geodesic flow on the infinite dimensional Lie group $\DiffS\circledS \CS\times \RR$, when the parameter $a=2$. We denote by $\DiffS$ the group of orientation preserving diffeomorphisms of the circle, $\CS$ denotes the space of smooth function on $\Circle$ while $\circledS$ denotes an appropriate semi-direct product between the pair. In Section \ref{Sec3} we also present rigorous criteria ensuring the existence of global solutions to the system in equation \eqref{s1:eq1}, for a class of nonlocal inertia operators subject to $s>1$ in the general case $a\neq1$. As a consequence of these general investigations we obtain the following result:
\begin{theorem}[Geodesic completeness]\label{thm:gc}
Let $s>3/2$ and $\kappa\ge 0$ be given and assume that $a=2$. Then the solution to \eqref{s1:eq1} emerging from any initial condition $(u_0,\rho_0)\in \CS\times\CS$ is smooth and exists globally in time, i.e.
\begin{equation*}
    (u,\rho)\in \mathrm{C}^{\infty}([0,\infty),\CS\oplus\CS),
\end{equation*}
meaning that the corresponding flow is geodesically complete.
\end{theorem}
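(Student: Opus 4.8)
The plan is to prove geodesic completeness by establishing two facts in sequence: first, that the geodesic flow equation \eqref{s1:eq1} with $a=2$ is locally well-posed on a suitable fractional Sobolev completion of the configuration group, and second, that the local solution cannot blow up in finite time, so that it extends to all of $[0,\infty)$. Since the problem reduces to showing non-blow-up, I would rely on the general criteria developed in Section \ref{Sec3} (which I am assuming), and my main task is to verify that their hypotheses are met precisely when $s>3/2$, $\kappa\ge0$, and $a=2$.

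First I would recast \eqref{s1:eq1} as the geodesic equation on the semi-direct product $\DiffS\circledS\CS\times\RR$, using the metric identification established in Section \ref{Sec2} that holds exactly for $a=2$. Working on the $W^{k,p}$ (or $H^s$) completion of the group, the geodesic flow is governed by an ODE on the tangent bundle whose right-hand side is smooth provided the inertia operator $A=(1-D^2)^s$ is an isomorphism between the appropriate Sobolev scales; the condition $s>1$ guarantees $A^{-1}$ is smoothing enough for the quadratic nonlinearity $au_xm+um_x+\kappa\rho\rho_x$ to define a smooth vector field via the standard Ebin--Marsden-type argument. This yields local existence and uniqueness of a smooth solution on a maximal interval $[0,T)$, together with the conservation of the energy $E(u,\rho)=\langle Au,u\rangle+\kappa\|\rho\|_{L^2}^2+\alpha$-terms, which is the $H^s$-type norm controlled by the metric and is constant along geodesics.

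The heart of the argument is the a priori bound that upgrades the conserved energy into control of the full Sobolev norm on finite time intervals. Here the restriction $s>3/2$ is decisive: by the Sobolev embedding $H^s\hookrightarrow C^1$ valid for $s>3/2$, the conserved energy controls $\|u\|_{H^s}$ and hence $\|u_x\|_{L^\infty}$ and $\|u\|_{L^\infty}$ uniformly in time. Using this, I would derive a Gr\"onwall-type differential inequality for a higher Sobolev norm $\|(u,\rho)\|_{H^r}$ with $r>s$: differentiating along the flow and commuting $A$ past the transport terms produces commutator estimates of Kato--Ponce type, each of which is bounded by $C(\|u_x\|_{L^\infty}+\|\rho\|_{L^\infty})\|(u,\rho)\|_{H^r}^2$. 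The sign condition $\kappa\ge0$ ensures the $\rho$-coupling term $\kappa\rho\rho_x$ contributes with a favourable (or at worst controllable) sign, preventing a destabilising feedback into the $u$-equation. Integrating the inequality shows the $H^r$-norm remains finite on any $[0,T']\subset[0,T)$, so $T=\infty$.

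The main obstacle I anticipate is the commutator estimate for the nonlocal operator $A=(1-D^2)^s$ at fractional order: unlike the classical Camassa-Holm case $s=1$, one cannot reduce to integer-order differentiation and integrate by parts in an elementary way, so I would need the fractional Leibniz and Kato--Ponce commutator inequalities to control $\|[A,u\partial_x]A^{-1}\|$ and close the estimate without losing derivatives. A secondary subtlety is propagating smoothness: local existence is first obtained at finite Sobolev regularity, and one must show that $C^\infty$ initial data yield $C^\infty$ solutions for all time, which follows by a bootstrapping argument running the same a priori estimate at every level $r$ and invoking persistence of regularity. Once these estimates are in place, the global smooth solution $(u,\rho)\in\mathrm{C}^{\infty}([0,\infty),\CS\oplus\CS)$ follows, and geodesic completeness is immediate since the geodesics are precisely the maximal solutions, now shown to be defined for all positive time.
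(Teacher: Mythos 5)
Your proposal is correct and follows essentially the same route as the paper: local well-posedness from the geometric (Ebin--Marsden) formulation, conservation of the metric energy, which for $\kappa\ge 0$ controls $\Vert u\Vert_{H^s}$ and hence $\Vert u_x\Vert_{L_\infty}$ via the Sobolev embedding valid for $s>3/2$, followed by propagation of all higher Sobolev norms (Lemma \ref{s3:lem2}) to contradict finite-time blow-up of the maximal solution. The one point of divergence is the obstacle you anticipate: the paper avoids the fractional Kato--Ponce commutator estimates entirely by running the higher-order a priori bounds on the momentum $m=Au$ (whose evolution equation is local and of integer order) together with the elliptic identity $\Vert u\Vert_{H^{k+2s}}=\Vert m\Vert_{H^k}$, so only the classical Leibniz rule at integer levels is needed.
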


In \cite{EHKL2014} the authors, with collaborators, presented a hydrodynamical derivation of the system \eqref{s1:eq1} with $s=1$ and $\kappa>0$, as a model for water waves with $\alpha$ a constant incorporating an underlying vorticity of the flow, following the works \cite{Iva2009} and \cite{Joh2003}. Additionally it was  shown that the hydrodynamical model obtained when $a=2$ corresponds with the Arnold-Euler equation of a right-invariant metric on the infinite dimensional Lie group $\GS = \DiffS\circledS \CS\times \RR$. Moreover, in line with the work of \cite{EM1970}, local uniqueness and existence of the geodesics on $\GS$ were proven, thus ensuring the well posedness of the system \eqref{s1:eq1} with $s=1$. Furthermore a priori estimates were obtained which established criteria for the global existence of solutions for the hydrodynamical system when $a\neq1$.

The system in \eqref{s1:eq1} incorporates a number of remarkable partial differential equations arising as approximate hydrodynamical models. In the case of irrotational flow ($\alpha = 0$), with $\rho\equiv0$ and $s=1$, the system above reduces to a family of equations parameterised by $a\neq1$, the so-called $b$-equations. This class of $b$-equations share several common structural features investigated in \cite{EY2008, Hen2008, Hen2009b} among other works. There are two scenarios in which the $b$-equations yield integrable models, specifically when $a=2$ corresponding to the Camassa-Holm equation \cite{CHH1994,CH1993}, and $a=3$ giving the Degasperis-Procesi equation \cite{DP1999}. Both systems arise as hydrodynamical models of shallow water waves c.f. \cite{Con2011,Joh2002}, possessing both global solutions and solutions which display wave-breaking in finite time, c.f. \cite{Con2000, CE1998b, CE1998a}.

All of these hydrodynamical models have a geometrical interpretation in terms of a geodesic flow on an appropriate infinite dimensional Lie group. The seminal work of Arnold \cite{Arn1966} reformulated the Euler equation describing an ideal fluid, as a geodesic flow on the group of volume preserving diffeomorphisms of the fluid domain. Following this, Ebin and Marsden reinterpreted this group of volume preserving diffeomorphisms as an inverse limit of Hilbert manifolds, c.f. \cite{EM1970}. The crucial aspect of this approach is that it allows one to reformulate the hydrodynamical model governed by a PDE, namely the Euler equation, in terms of the geodesic equation on the Hilbert manifolds, which is an ODE. The question of well-posedness of the Euler equation may then be reformulated in terms of the local existence and uniqueness of solutions to the geodesic equation.
In this manner well-posedness of the system in \eqref{s1:eq1} was established for the specific case $s=1$ and $a\neq1$ in the work \cite{EHKL2014}, and these results may be extended to ensure the well-posedness of the general system when $s>1$ and $a\in\RR\setminus\{1\}$.

In Section \ref{Sec2} of this paper we demonstrate that the system in \eqref{s1:eq1} can be recast as the flow of a right-invariant metric on the Fr\'{e}chet Lie group $\GS$, when $a=2$. This result is shown to be true for a general class of nonlocal inertia operators of the form $A=(1-D^2)^s$ with $s>1$. The well-posedness of the system remains valid for this general class and the reader is referred to the results presented in \cite{EHKL2014} \S 5, and also \cite{EK2014a} for further relevant discussions. Lastly in Section \ref{Sec3} we establish rigourous criteria ensuring global solutions for the system \eqref{s1:eq1} when $A$ induces a fractional Sobolev norm, for general $a\neq1$. In this regard, the pseudo-differential form of the inertia operator generates additional terms requiring careful analysis of the \textit{a priori} estimates necessary to ensure the existence of global solutions. We close our study by giving a proof of Theorem \ref{thm:gc}.

\section{Geometric Formulation}\label{Sec2}
The work undertaken in \cite{CK2002} investigated the  geodesic equation on $\DiffS$ associated with the right-invariant $H^{1}$ metric, namely the Camassa-Holm equation, which is a reduction of the system being currently being investigated. Meanwhile, in \cite{Esc2012} the short-wave limit of \eqref{s1:eq1}, namely $m=-u_{xx}$ and $s=1$, is shown to correspond to a geodesic flow on a symmetric right-invariant connection on a suitable semi-direct product, when $\alpha = 0$. Moreover it was shown in the case $a=2$ that this connection is associated with a metric induced by the norm
\[\norm{u_{xx}}_{L^2}+\norm{\rho}_{L^2},\]
with $u\in\CS/\RR$ and $\rho\in\CS$. It was shown in \cite{EHKL2014} that the system \eqref{s1:eq1} could be recast as a geodesic flow of a right invariant metric on a suitable semi-direct product, when $s=1$ and $a=2$. In Theorem \ref{s1:thm1} it will be shown that these results extend to the case when $A$ belongs to a general class of nonlocal inertia operators.

Given a Lie group $G$ with corresponding Lie algebra $\mathfrak{g}$, then any inner product on $\mathfrak{g}$ may be extended to a \textit{right-invariant metric} in $G$ by right translation of this inner-product. We may represent the inner product on the Lie algebra by an invertible operator
\[\mathbb{A}:\mathfrak{g}\to\mathfrak{g}^{*}.\]
Then a path $g(t)\subset G$ is a geodesic of the $\mathbb{A}$-induced right-invariant metric of $G$ if and only if the Eulerian velocity of $g(t),$ namely
\[ u(t) := TR_{g(t)^{-1}}\dot{g}(t)\]
satisfies the Arnold-Euler equation given by
\begin{equation*}
\begin{split}
    u_t = -\BB(u,u),\qquad  \BB(u_1,u_2) \deq \frac{1}{2}\left(\ad^{\top}_{u_1}u_2+ \ad^{\top}_{u_2}u_1\right)
\end{split}
\end{equation*}
where $u_{i}\in\mathfrak{g}$ for $i=1,2$ and $\ad_{u}^{\top}$ is the adjoint of $\ad_u$ with respect to $\AA.$ That is to say, given $\Lambda\in \mathcal{L}\left(\mathfrak{g},\mathfrak{g}\right),$ we define the adjoint $\Lambda^{\top}$ with respect to the norm $\langle\cdot,\cdot\rangle$ as follows
\begin{equation*}
    \langle u_1,\Lambda^{\top} u_2\rangle = \langle \Lambda u_1,u_2\rangle, \qquad \langle u_1,u_2\rangle = \int_{\Circle} u_1\AA u_2.
\end{equation*}
In what follows it will be demonstrated that the general system \eqref{s1:eq1} with $a=2$ and $s>1$ may be reformulated as the geodesic flow of a right invariant metric on the infinite dimensional Lie group $\GS:=\DiffS\circledS\CS\times\RR$.

Moreover we denote by $H^s(\Circle)$, where $s\ge 0$, the usual Sobolev spaces over $\Circle$, equipped with the norm
\begin{equation*}
\Vert u\Vert_{H^s}:=\left(\sum_{k\in\mathbb{Z}}(1+\vert k\vert^2)^s\hat u_k^2\right)^{1/2},\qquad u\in L_2(\Circle),
\end{equation*}
where $\hat u_k:= (u\vert \mathrm{\mathbf{e}}_k)_{L_2}$ is the $k$-th Fourier coefficient with respect to the standard
\footnote{For simplicity we normalise the length of the circle $\Circle$ to the value $1$, so that $\mathrm{\mathbf{e}}_k=\exp{ikx}$.}
orthonormal basis $\{\mathrm{\mathbf{e}}_k\,;\, k\in\mathbb{Z}\}$ of $L_2(\Circle)$. We observe that
\begin{equation}\label{eq:sob}
\Vert u\Vert_{H^s}=\int_{\Circle} u Au\,dx.
\end{equation}
Furthermore we note that with $\mathds{1}:=\mathrm{\mathbf{e}}_0$ we have
\begin{equation}\label{eq:A1}
A\mathds{1}=\sum_{k\in\mathbb{Z}}(1+\vert k\vert^2)^s(\mathds{1}\vert \mathrm{\mathbf{e}}_k)_{L_2}\mathrm{\mathbf{e}}_k=(\mathds{1}\vert \mathrm{\mathbf{e}}_0)_{L_2}\mathrm{\mathbf{e}}_0=\mathds{1}.
\end{equation}
The latter observation and the fact that $A$ is self-adjoint particularly implies that
\begin{equation}\label{eq:Ainv}
\int_{\Circle} A^{-1} w\,dx=(A^{-1} w\vert A\mathds{1})_{L_2}=(w\vert\mathds{1})_{L_2}=\int_{\Circle} w\,dx,\quad w\in L_2(\Circle).
\end{equation}
After the above preparations we can state the following result:

\begin{theorem}\label{s1:thm1}
Given $\kappa > 0$ and $a=2$ it is possible to write the system (\ref{s1:eq1}) as the Arnold-Euler equation on the Lie algebra $\gs$ of the Fr\'{e}chet Lie group $\GS$ with an associated inner product
\begin{multline}\label{s2:eq1}
    \langle(u_1,\rho_1,\alpha_1),(u_2,\rho_2,\alpha_2)\rangle = \int_{\Circle}u_1Au_2dx  + \kappa\int_{\Circle}\rho_1\rho_2dx \\ -\frac{1}{2}\int_{\Circle}\left[\alpha_2u_1+\alpha_1u_2\right]dx + \frac{1}{2}\alpha_1\alpha_2
\end{multline}
with $u_i,\rho_i\in\CS$ and $\alpha_i\in\RR$ for $i=1,2$.
\end{theorem}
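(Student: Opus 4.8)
The plan is to make the Lie algebra structure of $\gs$ explicit, read off from \eqref{s2:eq1} the inertia operator $\AA$ representing the inner product, compute $\ad^\top_\xi\xi$ directly from its defining relation, and finally match the Arnold-Euler equation $\xi_t = -\ad^\top_\xi\xi$ against \eqref{s1:eq1} with $a=2$. Throughout I write $\xi=(u,\rho,\alpha)$ and $m\deq Au$.

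First I would fix the algebra. The Lie algebra $\gs = \VectS\ltimes(\CS\oplus\RR)$ carries the bracket
\[\ad_{(u_1,\rho_1,\alpha_1)}(u_2,\rho_2,\alpha_2) = \bigl(u_2 u_{1,x} - u_1 u_{2,x},\; u_2\rho_{1,x} - u_1\rho_{2,x},\; 0\bigr),\]
where $\VectS$ acts on the scalars $\rho$ through $u\mapsto -u\partial_x$ and trivially on the $\RR$-summand; the sign convention on the $\VectS$-bracket is the one for which the pure $\VectS$-reduction of the Arnold-Euler equation returns the Camassa-Holm equation. Collecting in \eqref{s2:eq1}, for a fixed first argument, the terms paired against $u_2$, $\rho_2$ and $\alpha_2$, the associated inertia operator is
\[\AA(u,\rho,\alpha) = \Bigl(Au - \tfrac12\alpha,\; \kappa\rho,\; \tfrac12\alpha - \tfrac12\!\int_\Circle u\,dx\Bigr).\]

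The heart of the argument is to recover $\ad^\top_\xi\xi = (P,Q,R)$ from the relation $\langle\zeta,\ad^\top_\xi\xi\rangle = \langle\ad_\xi\zeta,\xi\rangle$, required for every $\zeta = (v,\sigma,\beta)$. Expanding $\langle\ad_\xi\zeta,\xi\rangle$ and integrating by parts, the coefficient of the test field $v$ yields $AP - \tfrac12 R = 2u_x m + u m_x + \kappa\rho\rho_x - \alpha u_x$, the coefficient of $\sigma$ yields $Q = (u\rho)_x$, and the coefficient of $\beta$ yields $R = \int_\Circle P\,dx$. The density form $Q = (u\rho)_x$ appears precisely because the scalar action $-u\partial_x$ dualizes, under the $\kappa L^2$-pairing, into the transport of a one-density, while the vorticity term $-\alpha u_x$ is produced solely by the off-diagonal $\alpha$--$u$ coupling in \eqref{s2:eq1}.

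The step I expect to require the most care is closing the $\RR$-sector, namely showing $R=0$ so that $\alpha_t = 0$. Solving the first relation as $P = A^{-1}\bigl(2u_x m + u m_x + \kappa\rho\rho_x - \alpha u_x + \tfrac12 R\bigr)$ and integrating over $\Circle$, identity \eqref{eq:Ainv} replaces $\int_\Circle A^{-1}$ by $\int_\Circle$; the contributions $\int\kappa\rho\rho_x$ and $\int\alpha u_x$ vanish by periodicity, whereas $\int(2u_x m + u m_x)\,dx = \int u_x m\,dx$ is identically zero since $A=(1-D^2)^s$ is self-adjoint and commutes with $D$, so that $\int u_x Au\,dx = \int u(Au)_x\,dx = -\int u_x Au\,dx$. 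Hence $\int_\Circle P\,dx = \tfrac12 R$, which together with $R = \int_\Circle P\,dx$ forces $R = 0$. Back-substituting gives $P = A^{-1}(2u_x m + u m_x + \kappa\rho\rho_x - \alpha u_x)$, and the Arnold-Euler equation $\xi_t = -\ad^\top_\xi\xi$ becomes, componentwise, $m_t = \alpha u_x - 2u_x m - u m_x - \kappa\rho\rho_x$, $\rho_t = -(u\rho)_x$ and $\alpha_t = 0$, which is exactly \eqref{s1:eq1} with $a=2$. Finally, for $\kappa>0$ the form \eqref{s2:eq1} is positive definite --- the estimate $\norm{u}_{H^s}^2 \ge \hat u_0^2$ absorbs the $\alpha$--$u$ cross term --- so $\AA$ is a bona fide inertia operator and the identification is complete.
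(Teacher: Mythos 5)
Your proposal is correct and follows essentially the same route as the paper: the same inertia operator $\mathbb{A}(u,\rho,\alpha)=\bigl(Au-\tfrac{\alpha}{2},\kappa\rho,\tfrac12(\alpha-\int u)\bigr)$, the same semi-direct product bracket, and the same computation of $\ad^{\top}_{\xi}\xi$ via integration by parts, $[A,D]=0$, self-adjointness of $A$, and the identity \eqref{eq:Ainv}; your fixed-point argument $R=\int P=\tfrac12 R$ is just a repackaging of the paper's explicit formula $\tilde\alpha=2\int f$ combined with $\int u_xAu\,dx=0$. The only blemish is a harmless sign slip in the intermediate chain $\int u_xAu\,dx=\int u(Au)_x\,dx$ (integration by parts gives $-\int u(Au)_x\,dx$), which does not affect the correct conclusion that the integral vanishes.
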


\begin{proof}
  We may write the inner product in (\ref{s2:eq1}) as
\begin{equation}\label{s2:eq2}
    \langle(u_1\,\rho_1,\alpha_1),(u_2,\rho_2,\alpha_2)\rangle = \int_{\Circle}(u_1,\rho_1,\alpha_1)\cdot\mathbb{A}(u_2,\rho_1,\alpha_2)dx,
\end{equation}
where $\cdot$ is the usual dot product in $\RR^3$. The inertia operator $\AA$ is given by
\begin{equation}\label{s2:eq3}
    \AA(u,\rho,\alpha) = \left(Au - \frac{\alpha}{2},\kappa\rho,\frac{1}{2}\Big(\alpha-\int_{\Circle}u\,dx\Big)\right)
\end{equation}
With $U_i = (u_i,\rho_i,\alpha_i)\in \gs$, where $i=1,2,$ we have
\begin{equation}\label{s2:eq4}
    \ad_{U_1}U_2 = \left(u_{1,x}u_2-u_1u_{2,x},\rho_{1,x}u_2 -\rho_{2,x}u_1,0\right)
\end{equation}
and so upon returning (\ref{s2:eq3}) and (\ref{s2:eq4})  to (\ref{s2:eq2}) we find
\begin{align*}
    \langle\ad_{U_1}U_2,U_3\rangle =& \int_{\Circle}\left(u_{1,x}u_2-u_1u_{2,x},\rho_{1,x}u_2-\rho_{2,x}u_1, 0\right)\cdot\AA(u_3,\rho_3,\alpha_3)\\
                                   =& \int_{\Circle}(u_{1,x}u_2-u_1u_{2,x})Au_3 dx +\kappa\int_{\Circle}\left(\rho_{1,x}u_2-\rho_{2,x}u_1, \right)\rho_3dx\\
                                   &-\frac{\alpha_3}{2}\int_{\Circle}(u_{1,x}u_2-u_1u_{2,x})dx
\end{align*}
Integration by parts along with $[A,D]= 0$ yields
\begin{equation}\label{s2:eq5}
\begin{split}
    \langle\ad_{U_1}U_2,U_3\rangle = & \int_{\Circle}u_2\left(u_{1,x}Au_3 +(u_1Au_{3})_x+ \kappa\rho_{1,x}\rho_3-\alpha_3u_{1,x}\right)\\
    &+ \kappa\int_{\Circle}\rho_2(u_1\rho_3)_x \\
    \deq&\int_{\Circle}(u_2,\rho_2,\alpha_2)\cdot(f,g,0)\\
    \deq & \int_{\Circle}(u_2,\rho_2,\alpha_2)\cdot\AA(\tilde{u},\tilde{\rho},\tilde{\alpha}).
\end{split}
\end{equation}
which defines $\tilde{U} = (\tilde{u},\tilde{\rho},\tilde{\alpha})$ in terms of $U_1$ and $U_3$. Thus we are looking for the unique solution of
\begin{equation*}
A\tilde{u} - \frac{\tilde{\alpha}}{2} = f\qquad \kappa\tilde{\rho} = g\qquad \tilde{\alpha} - \int_{\Circle}\tilde{u}dx = 0,
\end{equation*}
which, invoking \eqref{eq:A1}, is found to be
\begin{equation}\label{s2:eq5.1}
    \tilde{u} = A^{-1}f + \int_{\Circle}fdx\qquad \tilde{\rho} = \frac{1}{\kappa}g\qquad \tilde{\alpha} = 2\int_{\Circle}fdx,
\end{equation}
However by definition we also have
\[\langle\ad_{U_1}U_2,U_3\rangle = \langle U_2,\ad_{U_1}^{\top}U_3\rangle = \int_{\Circle}(u_2,\rho_2,\alpha_2)\cdot\AA(\tilde{u},\tilde{\rho},\tilde{\alpha})\]
in which case it follows that $\ad_{U_1}^{\top}U_{3} = (\tilde{u},\tilde{\rho},\tilde{\alpha})$.
Explicitly we have
\begin{equation}\label{s2:eq6}
\begin{split}
    \tilde{u} =& A^{-1}\left(u_{1,x}Au_3 +(u_1Au_{3})_x+ \kappa\rho_{1,x}\rho_3+\alpha_3u_{1,x}\right)\\
               &+\int_{\Circle} \left(u_{1,x}Au_3 + \kappa\rho_{1,x}\rho_3\right)dx\\
    \tilde{\rho} =& (u_1\rho_3)_x\\
    \tilde{\alpha} =&  -2\int_{\Circle}\left(u_{1,x}Au_3 + \kappa\rho_{1,x}\rho_3\right)dx
\end{split}
\end{equation}
Lastly we note by the symmetry of $A$ along with its commutativity with $D$, that
\begin{equation}\label{s2:eq7}
\int_{\Circle}u_{1,x}Au_3 = -\int_{\Circle}u_1Au_{3,x} = -\int_{\Circle}Au_1u_{3,x}
\end{equation}
and so the integral is antisymmetric in $u_1$ and $u_3.$
Hence substituting  $U=(u,\rho,\alpha)$ in (\ref{s2:eq6}) and employing (\ref{s2:eq7}) yields
\begin{equation}\label{s2:eq8}
\begin{split}
 \BB(U,U) =& \ad_{U}^{\top}U
          = \left(A^{-1}\left[u_xAu + (uAu)_x -\alpha u_x + \kappa\rho\rho_x\right], (u\rho)_x,0\right).
\end{split}
\end{equation}
Consequently the Arnold-Euler equation $U_{t} = -\BB(U,U)$
corresponds precisely with the system (\ref{s1:eq1}) when $a=2.$
\end{proof}
\begin{remark}
Note that the bilinear form given by \eqref{s2:eq1} is positive definite for any choice of $\kappa\ge 0$ and $\alpha\in\mathbb{R}$.
\end{remark}
\begin{proof} Define
\begin{equation*}
\Vert (u,\rho,\alpha)\Vert_{\mathbb{A}}^2:=\int_{\Circle}u Au\,dx+\kappa\int_{\Circle}\rho^2\,dx-\alpha\int_{\Circle}u\,dx+\frac{\alpha^2}{2}
\end{equation*}
for $u,\;\rho \in \CS$ and $\alpha\in\mathbb{R}$. Invoking $\int u^2\le \Vert u\Vert_{H^s}=\int uAu$, we get
$$
\Vert (u,\rho,\alpha)\Vert^2_{\mathbb{A}}\ge \int_{\Circle}(u-\frac{\alpha}{2})^2dx+\kappa\int_{\Circle}\rho^2dx.
$$
Thus $\Vert (u,\rho,\alpha)\Vert^2_{\mathbb{A}}>0$ for all $(u,\rho,\alpha)\in \gs$ with $u\ne\alpha/2$ or $\rho\ne 0$. But
$$
\Vert (\frac{\alpha}{2},0,\alpha)\Vert^2_{\mathbb{A}}=\frac{\alpha^2}{4},
$$
which completes the proof.
\end{proof}
\begin{remark} Given $u\,\;\rho\in\CS$ and $\alpha\in\mathbb{R}$, we have that
\begin{equation}\label{eq:aprio}
\frac{3}{4}\Vert u\Vert_{H^s}^2+\kappa\Vert\rho\Vert_{L_2}^2\le\Vert(u,\rho,\alpha)\Vert^2_{\mathbb{A}} +
\frac{\alpha^2}{2}.
\end{equation}
\end{remark}
\begin{proof}
Using the elementary inequality $-u\alpha\ge -u^2/4 - \alpha^2$, we get
$$
-\int_{\Circle}u\alpha\, dx\ge - \frac{1}{4}\int_{\Circle}u^2 dx-\alpha^2\ge - \frac{1}{4}\Vert u\Vert_{H^s}^2-\alpha^2
$$
which readily gives the assertion.
\end{proof}
\begin{remark}\label{rm:3}
Consider the case $a=2$ and let $(u,\rho)$ be a solution to \eqref{s1:eq1} on the time interval $J$. Then the quantity
$
\Vert (u,\rho,\alpha)(t)\Vert_{\mathbb{A}}
$
is constant on $J$.
\end{remark}
\begin{proof}
We have
\begin{equation}\label{eq:tvar}
\frac{1}{2}\frac{d}{dt} \Vert (u,\rho,\alpha)(t)\Vert_{\mathbb{A}}^2=\int_{\Circle}u_tAu\,dx+\kappa \int_{\Circle} \rho_t\rho\,dx - \alpha\int_{\Circle} u_t\,dx.
\end{equation}
Invoking \eqref{s1:eq1} we find because of $[A,D]=0$:
\begin{align*}
    \int_{\Circle} u_t Au =& \int_{\Circle}m_t u\\
                                   =&\ \alpha\int_{\Circle}uu_x- 2\int_{\Circle}u u_x Au-\int_{\Circle}u^2 Au_x-\kappa\int_                                      {\Circle}\rho\rho_x\\
                                   =&\ \frac{\alpha}{2}\int_{\Circle}(u^2)_x- \int_{\Circle}(u^2)_xAu +
									  \int_{\Circle}(u^2)_xAu
										-\kappa\int_{\Circle}\rho\rho_x u\\
								   =&\ -\kappa\int_{\Circle}\rho\rho_x u.
\end{align*}
For the second integral of the right-hand side of \eqref{eq:tvar} we get
$$
\kappa\int_{\Circle}\rho_t\rho =-  \kappa\int_{\Circle}(\rho u)_x\rho =  \kappa\int_{\Circle}\rho\rho_x u.
$$
Thus $\int u_t Au+\kappa\int\rho_t\rho=0$ and we are left to show that $\int u_t=0$. Using \eqref{eq:Ainv} we obtain
\begin{align*}
    \int_{\Circle} u_t  =&\ \alpha\int_{\Circle} A^{-1}u_x - 2\int_{\Circle}A^{-1}(u_x Au)-
							\int_{\Circle}A^{-1}(uAu_x)-\kappa\int_{\Circle}\rho\rho_x\\
                        =&\ \alpha\int_{\Circle} u_x - 2\int_{\Circle}u_x Au-
							\int_{\Circle}uAu_x-\kappa\int_{\Circle}(\rho^2)_x.
\end{align*}
Recalling $[A,D]=0$, integration by parts yields $\int u_t=\int uAu_x$. But $\int u Au_x=0$ by \eqref{s2:eq7} and the proof is complete.
\end{proof}
Following the work of Arnold \cite{Arn1966}, Ebin \& Marsden \cite{EM1970} considered the group of smooth diffeomorphisms as an inverse limit of Hilbert manifolds.  This essentially allows one to recast classes of PDEs, namely the Euler equation, in terms of an ODE, that is to say the geodesic equation on these Hilbert manifolds. These techniques were applied in \cite{Mis2002,CK2003} for the periodic Camassa-Holm equation, while in \cite{EK2011} the Degasperis-Procesi equation was recast as a non-metric geodesic flow. Moreover in \cite{CK2003a, EK2014} these methods were extended to right-invariant metrics induced by fractional Sobolev norms, following from \cite{Kol2009}. In \cite{EHKL2014} the same techniques were applied to the hydrodynamical reduction of \eqref{s1:eq1}, namely $s=1$,  thereby establishing well-posedness of that system. Having reformulated the system \eqref{s1:eq1} as a right-invariant metric flow on the Fr\'{e}chet Lie group $\DiffS\circledS\CS\times\RR$ when $a=2$ and $s>1$, these same techniques also ensure the well-posedness of the system for this general class of fractional Sobolev norms with $a\in\RR\setminus\{1\}$.

\section{Global Solutions}\label{Sec3}
As was pointed out in Section \ref{Sec1}, the Camassa-Holm equation arises as a reduction of the system \eqref{s1:eq1}, namely when $\rho\equiv0$, $a=2$, $\alpha=0$ and $s=1$, and given the classical results of \cite{Con2000, CE1998a,CE1998b} it is clear that not all solutions of either system should exist globally. However that is not to say that global solutions for the class of systems currently being investigated  do not exist. A reduction of the system to the two-component Camassa-Holm equation, namely $s=1$, $\alpha =0$ and $a=2$, was found to possess several notable features in the works \cite{CI2008, Hen2009}, including global solutions, thus providing motivation for the investigation of global existence criteria for the general system \eqref{s1:eq1}.

The solutions $(u,\rho)$ to the system in equation \eqref{s1:eq1}, with associated initial data $(u_0,\rho_0)$, correspond to smooth paths in
\begin{equation*}
    (u,\rho)\in \mathrm{C}^{\infty}(J,\CS\oplus\CS),
\end{equation*}
where $J$ is the maximal time interval of their existence. In what follows we present a collection of \textit{a priori} estimates which establish criteria for the existence of global solutions for \eqref{s1:eq1} when $A$ induces a class of fractional Sobolev norms.

\begin{lemma}\label{s3:lem1} We have
\begin{equation}\label{s3:eq1}
	(\rho\circ\phi)(t)\cdot\phi_x^{a-1}(t) = (\rho\circ\phi)(0)\cdot\phi_x^{a-1}(0)\quad \forall t\in J.
\end{equation}
\end{lemma}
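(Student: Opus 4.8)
The claim is that the quantity $(\rho\circ\phi)\cdot\phi_x^{a-1}$ is constant along the flow. Here $\phi(t)$ is the flow map associated with the Eulerian velocity $u$, so it satisfies the characteristic ODE
\[
\phi_t(t,x) = u(t,\phi(t,x)),\qquad \phi(0,x)=x,
\]
which is the standard companion to the transport equation for $\rho$ in \eqref{s1:eq1}. My plan is a direct differentiation in $t$ of the left-hand side of \eqref{s3:eq1}, using the $\rho$-equation and the evolution of the Jacobian $\phi_x$.

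\textbf{Step 1: evolution of the Jacobian.} Differentiating the characteristic ODE in $x$ gives
\[
\partial_t \phi_x = u_x(t,\phi)\,\phi_x,
\]
so that $\partial_t(\phi_x^{a-1}) = (a-1)\,u_x(t,\phi)\,\phi_x^{a-1}$. Since $\phi_x(0,x)=1>0$ and this is a linear ODE in $\phi_x$, positivity is preserved and the power $\phi_x^{a-1}$ is well defined for all $t\in J$.

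\textbf{Step 2: evolution of $\rho\circ\phi$.} By the chain rule,
\[
\partial_t(\rho\circ\phi) = (\rho_t)\circ\phi + (\rho_x\circ\phi)\,\phi_t
 = (\rho_t)\circ\phi + (\rho_x\circ\phi)\,u(t,\phi).
\]
Substituting the second equation of \eqref{s1:eq1}, namely $\rho_t = -u\rho_x - (a-1)u_x\rho$, and evaluating it at the point $\phi(t,x)$, the two $\rho_x$ terms cancel and I am left with
\[
\partial_t(\rho\circ\phi) = -(a-1)\,(u_x\circ\phi)\,(\rho\circ\phi).
\]

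\textbf{Step 3: combine via the product rule.} Writing $Q(t,x):=(\rho\circ\phi)\cdot\phi_x^{a-1}$ and applying the product rule with Steps 1 and 2,
\[
\partial_t Q = \big[-(a-1)(u_x\circ\phi)(\rho\circ\phi)\big]\phi_x^{a-1}
 + (\rho\circ\phi)\big[(a-1)(u_x\circ\phi)\phi_x^{a-1}\big] = 0.
\]
Hence $Q(t,x)=Q(0,x)$ for all $t\in J$, which is exactly \eqref{s3:eq1}.

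This argument is essentially a bookkeeping computation, so I do not anticipate a genuine obstacle; the only point needing care is the justification that $\phi$ is a smooth flow with strictly positive Jacobian for all $t$ in the maximal interval $J$ — this is precisely what guarantees that the fractional power $\phi_x^{a-1}$ is meaningful and differentiable — and that is furnished by the well-posedness and smoothness of the geodesic flow established in Section \ref{Sec2}.
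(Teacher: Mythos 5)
Your proof is correct and follows exactly the same route as the paper: differentiate $(\rho\circ\phi)\cdot\phi_x^{a-1}$ in time, use $\phi_t=u\circ\phi$ and $\partial_t\phi_x=(u_x\circ\phi)\phi_x$, and observe that the resulting bracket $\rho_t+u\rho_x+(a-1)u_x\rho$ vanishes by the second equation of \eqref{s1:eq1}. Your version is simply a more detailed write-up of the paper's one-line computation in \eqref{s3:eq2}, with the welcome extra remark on positivity of the Jacobian justifying the power $\phi_x^{a-1}$.
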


\begin{proof} Let $\phi$ be the flow of the vector field $u$. Then we have
\begin{equation}\label{s3:eq1a}
\phi_t = u\circ\phi
\end{equation}
and so it follows
\begin{equation}\label{s3:eq2}
\begin{split}
	\frac{d}{{d}t}(\rho\circ\phi\cdot\phi_x^{a-1}) &=\left((\rho_t+u_x\rho+(a-1)\rho u_x)\circ\phi\right)\cdot\phi^{a-1}_x=0,
\end{split}
\end{equation}
since $(u,\rho)$ solves the system (\ref{s1:eq1}).
\end{proof}

\begin{corollary}\label{s3:cor1}
If $\rho_0 > 0$, it follows that $\rho(t) > 0$ for all $t\in J$.
\end{corollary}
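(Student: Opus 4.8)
The plan is to derive the corollary directly from Lemma \ref{s3:lem1}, using two structural facts about the flow $\phi$ of the vector field $u$: at each fixed $t\in J$ the map $\phi(t)$ is an orientation-preserving diffeomorphism of $\Circle$, hence $\phi_x(t)>0$ pointwise, and $\phi(t)$ is a bijection of $\Circle$. First I would record the initial data of the flow: since $\phi(0)=\id$, we have $\phi_x(0)\equiv 1$ and $(\rho\circ\phi)(0)=\rho_0$. Substituting these into the conserved quantity \eqref{s3:eq1} gives
\[
(\rho\circ\phi)(t)\cdot\phi_x^{a-1}(t)=(\rho\circ\phi)(0)\cdot\phi_x^{a-1}(0)=\rho_0>0,
\qquad t\in J.
\]

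Next I would exploit positivity of $\phi_x$. Because $\phi_x(t)>0$ everywhere on $\Circle$, the factor $\phi_x^{a-1}(t)$ is strictly positive for every real exponent $a-1$, so dividing the displayed identity by it yields $(\rho\circ\phi)(t)>0$ pointwise on $\Circle$. Finally, since $\phi(t)$ is a bijection of $\Circle$, the range of $\phi(t)$ is all of $\Circle$, so positivity of the composition $\rho\circ\phi(t)$ on all of $\Circle$ is equivalent to positivity of $\rho(t)$ on all of $\Circle$. This is exactly the assertion, and it holds for every $t\in J$.

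The only step requiring genuine justification --- and the nearest thing to an obstacle --- is the claim that $\phi_x(t)>0$ throughout $J$. This follows from the structure of the flow equation \eqref{s3:eq1a}: differentiating $\phi_t=u\circ\phi$ in $x$ shows that $\phi_x$ solves the linear variational equation $\partial_t\phi_x=(u_x\circ\phi)\,\phi_x$ with $\phi_x(0)\equiv 1$, whose solution
\[
\phi_x(t)=\exp\left(\int_0^t (u_x\circ\phi)\,d\tau\right)
\]
is manifestly strictly positive; equivalently, $\phi(t)$ remains in $\DiffS$ as an orientation-preserving diffeomorphism for as long as it exists. With this in hand the remaining arguments are immediate, so the proof is essentially an application of the conservation law of Lemma \ref{s3:lem1} combined with the strict positivity of the Jacobian of the flow.
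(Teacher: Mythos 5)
Your proof is correct and follows essentially the same route as the paper: apply the conservation law of Lemma \ref{s3:lem1}, use strict positivity of $\phi_x$ (the Jacobian of the orientation-preserving flow) to divide it out, and conclude. You are in fact slightly more careful than the paper, since you justify $\phi_x(t)>0$ via the variational equation and explicitly pass from positivity of $\rho\circ\phi(t)$ to positivity of $\rho(t)$ using surjectivity of $\phi(t)$.
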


\begin{proof}
It follows from Lemma \ref{s3:lem1} that
\begin{equation}\label{s3:eq3}
\begin{split}
    (\rho\circ\phi)(t)\cdot\phi_x^{a-1}(t) &= (\rho\circ\phi)(0)\cdot\phi_x^{a-1}(0)\\
     (\rho\circ\phi)(t) &=  (\rho\circ\phi)(0)\cdot\frac{\phi_x^{a-1}(0)}{\phi_x^{a-1}(t)}
\end{split}
\end{equation}
However since $\phi(t)$ is an orientation preserving diffeomorphism we must have
\[\frac{\phi_x(0)}{\phi_x(t)} > 0\quad \forall\ t\in J,\]
in which case if $\rho(0)>0$ it follows from (\ref{s3:eq3}) that
\begin{equation}\label{s3:eq4}
(\rho\circ\phi)(t) >0
\end{equation}
for all $t\in J.$
\end{proof}

\begin{corollary}\label{s3:cor2}
 If $\norm{u_x(t)}_\infty$ is bounded on any bounded subinterval of $J$ then $\norm{\rho(t)}_\infty$ is also bounded on any bounded subinterval of $J.$
\end{corollary}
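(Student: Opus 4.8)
The plan is to reduce the statement to the explicit conservation law of Lemma \ref{s3:lem1} together with an exponential bound on the Jacobian $\phi_x$ of the flow. First I would observe that, since $\phi(t,\cdot)$ is an orientation preserving diffeomorphism of $\Circle$, precomposition with it preserves the supremum norm, so that $\norm{\rho(t)}_\infty = \norm{(\rho\circ\phi)(t)}_\infty$ for every $t\in J$. Using the normalisation $\phi(0)=\id$ (whence $\phi_x(0)\equiv 1$ and $(\rho\circ\phi)(0)=\rho_0$), the rearranged identity \eqref{s3:eq3} becomes the pointwise representation
\begin{equation*}
(\rho\circ\phi)(t,x) = \rho_0(x)\,\phi_x(t,x)^{-(a-1)},
\end{equation*}
so the assertion follows as soon as $\phi_x^{-(a-1)}(t)$ is controlled uniformly in $x$ on bounded subintervals.

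The second ingredient is the evolution of the Jacobian. Differentiating the flow equation \eqref{s3:eq1a} in $x$ gives, for each fixed $x$, the linear ODE $\partial_t\phi_x = (u_x\circ\phi)\,\phi_x$, which integrates to
\begin{equation*}
\phi_x(t,x) = \exp\left(\int_0^t (u_x\circ\phi)(\tau,x)\,d\tau\right).
\end{equation*}
Fix a bounded subinterval of $J$; enlarging it to include the initial time keeps it bounded and inside $J$, so we may assume it has the form $[0,T]$, and by hypothesis $\norm{u_x(\tau)}_\infty \le M$ there for some constant $M$. The integrand above is then bounded by $M$ uniformly in $x$, which yields the two-sided estimate $e^{-MT}\le \phi_x(t,x)\le e^{MT}$ for all $(t,x)\in[0,T]\times\Circle$.

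Finally I would combine the two estimates. Irrespective of the sign of $a-1$, the two-sided bound on $\phi_x$ yields $\phi_x^{-(a-1)}(t,x)\le e^{\abs{a-1}MT}$, and hence
\begin{equation*}
\norm{\rho(t)}_\infty = \norm{(\rho\circ\phi)(t)}_\infty \le \norm{\rho_0}_\infty\, e^{\abs{a-1}MT},\qquad t\in[0,T],
\end{equation*}
which is the desired bound. I do not expect a genuine obstacle here: the whole content sits in the Jacobian estimate, and the only points needing minor care are the invariance of the sup-norm under composition with $\phi$ and the bookkeeping of the sign of $a-1$, so that the exponent $\abs{a-1}$ is paired with the correct one-sided bound on $\phi_x$.
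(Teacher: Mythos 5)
Your proposal is correct and follows essentially the same route as the paper: Lemma \ref{s3:lem1} reduces everything to controlling the Jacobian $\phi_x$, which is then bounded exponentially using the hypothesis on $\norm{u_x}_\infty$ (you integrate the linear ODE $\partial_t\phi_x=(u_x\circ\phi)\phi_x$ explicitly, whereas the paper applies Gronwall to $\gamma(t)=\max_x 1/\phi_x(t)$ --- the same estimate in different clothing). If anything, your two-sided bound $e^{-MT}\le\phi_x\le e^{MT}$ is slightly more complete, since the paper's written proof only records the bound on $1/\phi_x$, which by itself covers the case $a>1$; the case $a<1$ needs the upper bound on $\phi_x$ that you supply.
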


\begin{proof}
Define $\phi$ as the flow of  the time dependent vector field $u$ and set
\[\gamma(t) = \max_{x\in\Circle}\frac{1}{\phi_x(t)}\]
where we also note $\gamma(t)$ is continuous. Let $I \subset J$ where $I$ is bounded and denote
\begin{equation}\label{K}
K = \sup_{t\in I}\norm{u_x(t)}_\infty.\end{equation}
Given (\ref{s3:eq1a}) we see that
\begin{equation}\label{s3:eq5}
    \left(\frac{1}{\phi_x}\right)_t = -\frac{\phi_{tx}}{\phi_x^2}= -\frac{(u\circ\phi)_x}{\phi_x^2} = -\frac{u_x\circ\phi}{\phi_x}
\end{equation}
which following an integration, and using the definition of $\gamma$ and the bound \eqref{K} leads us to the inequality
\begin{equation}\label{s3:eq7}
    \gamma(t) - \gamma(0) \leq K \int_{0}^{t}\gamma(s)\rm{d} s.
\end{equation}
A subsequent application of Gronwall's Lemma ensures
\begin{equation}\label{s3:eq8.0}
    \gamma(t)\leq\gamma(0)e^{Kt},
\end{equation}
that is to say $\frac{1}{\phi_x(t)}$ is bounded on $I.$
It follows from Lemma \ref{s3:lem1} that $\norm{\rho(t)}_{\infty}$ is bounded on $I$.
\end{proof}

\subsection{Estimates for fractional Sobolev norms}\label{Sec3.1}
When we generalise the inertia operator according to $A = (1-D^2)^{s}$ and $s\in(1,\infty)$, the quantity $u_xm$ can no longer be written as a total derivative and so the \textit{a priori} estimates obtained in \cite{EHKL2014} \S6, must be generalised to account for this term.
The operator $\Lambda^{s}=(1-D^2)^{\frac{s}{2}}$ is a topological isomorphism of $H^{s}(\Circle)$ onto $L^2(\Circle)$.
Providing $H^{s}(\Circle)$ with a Hilbert space structure makes $\Lambda^{-s}(\Circle)$ an isometric isomorphism of $L^2(\Circle)$ onto $H^s(\Circle)$.
That is to say, if $(u,v)\in H^s(\Circle)\times H^s(\Circle),$ their inner product may be written as
\begin{equation}\label{s3:eq8}
    \langle u,v\rangle_{H^s} = \langle \Lambda^{s} u,\Lambda^{s} v\rangle_{L^2}.
\end{equation}
Moreover since $\Lambda^{s} \in \mathcal{L}\left(H^s,L^2\right)$ it follows
\begin{equation}\label{s3:eq9}
    \norm{\Lambda^{s}u}_{L^2}\lesssim\norm{u}_{H^s}.
\end{equation}
The Sobolev imbedding $H^s(\Circle)\hookrightarrow L_\infty(\Circle)$ for $s>1/2$ implies the following bound
\begin{equation}\label{e3:eq10}
    \norm{u}_\infty\lesssim \norm{u}_{H^s} \qquad \forall u\in H^s(\Circle),
\end{equation}
while the fact $D\in\mathcal{L}\left(H^{s+1}(\Circle),H^{s}(\Circle)\right)$ ensures $\norm{D u}_{H^s} \leq C\norm{u}_{H^{1+s}}$
and consequently we have the following estimate for $u_x$
\begin{equation}\label{s3:eq11}
    \norm{u_x}_\infty\lesssim \norm{u}_{H^s},\qquad\text{if}\ s>\frac{3}{2}.
\end{equation}
Finally we note that
\begin{equation}\label{u - m}
\Vert u_x\Vert_{L_\infty}\le \Vert u\Vert_{H^{2s}}=\Vert Au\Vert_{L_2}=\Vert m\Vert_{L_2},\quad u\in \CS,
\end{equation}
by the Sobolev embedding theorem based on the assumption $s>3/2$.

\noindent
In addition to the aforementioned estimate for $\norm{u_x}_{\infty}$, we have the related result bounding $\norm{m}_{H^{k}}+\norm{\rho}_{H^{k+1}}$ given by the following:
\begin{lemma}\label{s3:lem2}
Suppose $\norm{u_{x}}_{\infty}$ is bounded on any bounded subinterval of $J.$ Then
\[\norm{m}_{H^k}^2 + \norm{\rho}_{H^{k+1}}^2\]
is also bounded, on any bounded subinterval of $J$ for all integers $k\geq0$.
\end{lemma}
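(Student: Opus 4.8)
The plan is to prove the estimate by induction on $k$, controlling the combined energy $E_k(t):=\norm{m(t)}_{H^k}^2+\norm{\rho(t)}_{H^{k+1}}^2$ through a Gronwall argument on an arbitrary bounded subinterval $I\subset J$. Throughout I would set $K:=\sup_{t\in I}\norm{u_x(t)}_\infty<\infty$ (finite by hypothesis) and recall that, by Corollary \ref{s3:cor2}, $\norm{\rho(t)}_\infty$ is likewise bounded on $I$; these two quantities, together with the lower-order norms furnished by the induction hypothesis, supply all the time-dependent coefficients below. The base case $k=0$ is obtained by differentiating $\norm{m}_{L^2}^2+\norm{\rho}_{H^1}^2$ directly: the transport contributions $\int_{\Circle}u m_x\,m$ and the analogous term in the $\rho$-equation are integrated by parts to produce $\tfrac12\int_{\Circle}u_x(\cdot)^2$, hence are bounded by $C K\,E_0$; the terms $a u_x m$ and $(a-1)u_x\rho$ are immediately bounded by $C K\,E_0$; and the coupling $\kappa\rho\rho_x$ is estimated by $\norm{\rho}_\infty\norm{\rho_x}_{L^2}\norm{m}_{L^2}\lesssim E_0$, where the boundedness of $\norm{\rho}_\infty$ is exactly what keeps this term linear in $E_0$.

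For the inductive step I would assume $\norm{m}_{H^{k-1}}^2+\norm{\rho}_{H^k}^2$ is already bounded on $I$. Since $k$ is an integer, $\norm{m}_{H^k}^2$ is equivalent to $\sum_{j=0}^k\norm{D^j m}_{L^2}^2$, so it suffices to bound $\tfrac{d}{dt}\norm{D^k m}_{L^2}^2$ and $\tfrac{d}{dt}\norm{D^{k+1}\rho}_{L^2}^2$, the strictly lower orders being controlled by induction. Expanding $D^k(u m_x)$ and $D^k(u_x m)$ by the Leibniz rule, the only pieces in which the top derivatives reach $m$ are $u\,D^{k+1}m$ and $u_x D^k m$; after one integration by parts the first becomes $-\tfrac12\int_{\Circle}u_x(D^k m)^2$, and both are bounded by $C K\norm{D^k m}_{L^2}^2$. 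Every remaining term carries a factor $D^j u$ with $j\ge 2$ (respectively $D^{j+1}u$ with $j\ge 1$), which by the smoothing $\norm{D^j u}_{H^l}\lesssim\norm{m}_{H^{l+j-2s}}$, valid since $2s>2$, together with the embeddings \eqref{e3:eq10}, \eqref{s3:eq11} and the induction hypothesis, is of strictly lower order; each is therefore bounded by a constant depending only on $K$ and the induction-controlled norms, times $\norm{D^k m}_{L^2}$, hence is $\lesssim C(1+E_k)$. The coupling $\kappa\,D^k(\rho\rho_x)$ is handled by writing its top piece as $\rho\,D^{k+1}\rho$ and using $\norm{\rho}_\infty$ bounded (Corollary \ref{s3:cor2}) together with the induction bound on the lower $\rho$-norms, which again renders the whole contribution linear, $\lesssim C\,\norm{\rho}_{H^{k+1}}\norm{m}_{H^k}\lesssim C\,E_k$. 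The companion estimate for the $\rho$-equation is identical in structure: the transport term $u\rho_x$ is treated by integration by parts at top order, while $(a-1)u_x\rho$ is controlled by the same product bound and the smoothing of $A^{-1}$.

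Assembling these bounds yields a differential inequality $\tfrac{d}{dt}E_k\le C(t)\,(1+E_k)$ in which $C(t)$ depends only on $\norm{u_x(t)}_\infty$, $\norm{\rho(t)}_\infty$ and the induction-controlled lower-order norms, all bounded on $I$; Gronwall's lemma, exactly as applied in the proof of Corollary \ref{s3:cor2}, then gives boundedness of $E_k$ on $I$ and completes the induction. I expect the main obstacle to be the bookkeeping for the genuinely nonlocal term $a u_x m$, which for $s>1$ can no longer be absorbed into a total derivative, and, relatedly, ensuring that no product term involving the rough variable $m$ nor the quadratic coupling $\rho\rho_x$ produces a contribution that is superlinear in $E_k$: the former difficulty is resolved by the $2s>2$ smoothing of $A^{-1}$ and the induction hypothesis, the latter by the a priori sup-bound on $\rho$ from Corollary \ref{s3:cor2}.
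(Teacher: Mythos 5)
Your proposal follows essentially the same route as the paper: induction on $k$ with the energy $\norm{m}_{H^k}^2+\norm{\rho}_{H^{k+1}}^2$, integration by parts to absorb the top-order transport terms into $\tfrac12\int u_x(\cdot)^2$, the Leibniz rule plus the smoothing $\norm{u}_{H^{l+2s}}=\norm{m}_{H^l}$ to reduce the remaining products to induction-controlled lower-order norms, and Gronwall's lemma to close; the paper organizes this as an explicit $k=0$ and $k=1$ computation followed by the general step via its commutator estimate \eqref{s3:eq27}, but the substance is identical. Your explicit appeal to Corollary \ref{s3:cor2} to keep the $\kappa\rho\rho_x$ coupling linear in the energy is exactly the point the paper relies on as well, so the argument is sound.
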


\begin{proof}The proof begins by demonstrating that the quantity $\norm{m}_{L^2}+\norm{\rho}_{H^1}$ remains bounded when $\norm{u_x}_{\infty}$ remains bounded on a bounded subset of $J$.
\begin{equation}\label{s4:eq12}
\begin{split}
\frac{d}{dt} \left(\norm{m}_{L^2}^2 + \norm{\rho}_{H^1}^2\right)&=\int_{\Circle} mm_tdx + \int_{\Circle}(\rho\rho_t+\rho_x\rho_{xt})\\
                                                                &= \int_{\Circle} m(\alpha u_x - au_xm-um_x -\kappa\rho\rho_x)\\
                                                                &\quad+\int_{\Circle}\rho_x(-u\rho_{xx} - au_x\rho_x -(a-1)\rho u_{xx} )\\
                                                                &\qquad +\int_{\Circle}\rho(-u\rho_x - (a-1)u_x\rho) \\
\end{split}
\end{equation}
Integration by parts yields
\[-\int_{\Circle}umm_xdx= \frac{1}{2}\int_{\Circle}u_xm^2\qquad -\int_{\Circle}u\rho\rho_{x}dx=\frac{1}{2}\int_{\Circle}u_x\rho^2dx\]
which in conjunction with the Cauchy-Schwarz inequality provides the following estimate
\begin{equation}\label{s3:eq13}
\frac{d}{dt}\left(\norm{m}_{L^2}+\norm{\rho}_{H^2}\right)
\lesssim\int_{\Circle}\alpha u_{x}mdx+
\left(1+\norm{u_x}_{\infty}+\norm{\rho}_{\infty}\right)\left(\norm{m}_{L^2}^2+\norm{\rho}_{H^{1}}^2\right).
\end{equation}
Meanwhile, H\"{o}lder's inequality applied to the remaining term yields
\begin{equation*}
\int_{\Circle}\alpha u_xm\leq\abs{\alpha}\norm{u_x}_{\infty}\int_{\Circle}\abs{m}dx\lesssim\norm{u_x}_{\infty}\norm{m}_{L^2}\\
\end{equation*}
and it follows at once that
\begin{equation}\label{s3:eq14}
    \int_{\Circle}\alpha u_{x}mdx\lesssim \norm{u_x}_{\infty}^2 + \norm{m}_{L^2}^2.
\end{equation}
Upon combining these estimates we find
\begin{equation}\label{s3:eq15}
\frac{d}{dt}\left(\norm{m}_{L^2}^2 +\norm{\rho}_{H^1}^2\right)\lesssim \norm{u_x}_{\infty}^2+ \left(1+\norm{u_x}_\infty +\norm{\rho}_\infty\right)\left(\norm{m}_{L^2}^2+\norm{\rho}_{H^1}^2\right).
\end{equation}
A straightforward application of the integral form of Gronwall's inequality ensures that the quantity $\norm{m}_{L^2}^2 +\norm{\rho}_{H^1}^2$ is bounded for all $t \in J$ when $\norm{u_x}_{\infty}$ is bounded in $J$.
\begin{remark}
Gronwall's Lemma requires that the functions $m$ and $\rho$ be continuous, however these functions are in fact smooth and so an application of the lemma is permitted.
\end{remark}

At the next iteration, we wish to show the quantity $\norm{m}_{H^1}^2 + \norm{\rho}_{H^{2}}^2$
is also bounded under the same conditions. Specifically we want to show
\begin{equation}\label{s3:eq16}
\frac{d}{dt}\left(\norm{m}_{H^1}^2+\norm{\rho}_{H^{2}}^2\right)\lesssim\norm{m}_{L^2}^2+\left(1+\norm{m}_{L^2}
+\norm{\rho}_{H^{1}}\right)\left(\norm{m}_{H^1}^2+\norm{\rho}_{H^{2}}^2\right).
\end{equation}
We already have
\begin{equation*}
\begin{split}
\frac{d}{dt}\left(\norm{m}_{L^2}^2+\norm{\rho}_{H^1}^2\right)&\lesssim\norm{u_x}_{\infty}^2+\left(1+\norm{u_x}_\infty +\norm{\rho}_\infty\right)\left(\norm{m}_{L^2}^2 +\norm{\rho}_{H^1}^2\right)\\
&\lesssim\norm{m}_{L^2}^2+\left(1+\norm{m}_{L^2}+\norm{\rho}_{H^{1}}\right)\left(\norm{m}_{H^1}^2+\norm{\rho}_{H^{2}}^2\right),
\end{split}
\end{equation*}
with the second relation being a consequence of the imbeddings $H^2\hookrightarrow H^1$ and $H^1\hookrightarrow L_\infty,$
and the estimate \eqref{u - m}. As such we need only show
\begin{equation}\label{s3:eq17}
\frac{d}{dt}\left(\norm{m_x}_{L^2}^2+\norm{\rho_{xx}}_{L^2}^2\right)\lesssim\norm{m}_{L^2}^2+\left(1+\norm{m}_{L^2} +\norm{\rho}_{H^{1}}\right)\left(\norm{m}_{H^1}^2+\norm{\rho}_{H^{2}}^2\right).
\end{equation}
Considering the first term of \eqref{s3:eq17} we have
\begin{equation}\label{s3:eq18}
\begin{split}
\frac{d}{dt}\norm{m_x}_{L^2}^2
	&=\int_{\Circle} m_x(\alpha u_{xx}-au_{xx}m-au_xm_x - u_xm_x - um_{xx} - \kappa\rho_x^2 - \kappa\rho\rho_{xx})dx.\\
\end{split}
\end{equation}
Integration by parts yields $-\int_{\Circle} um_xm_{xx}dx = \frac{1}{2}\int_{\Circle} u_xm_x^2dx$; thus the Cauchy-Schwarz inequality provides a bound for the norm $\norm{m_x}_{L^2}$ according to
\begin{equation}\label{s3:eq19}
\begin{split}
\frac{d}{dt}\norm{m_x}^2 &\lesssim \norm{u}_{H^2}\norm{m}_{H^1} + \norm{u}_{H^2}\norm{m}_{H^1}^2 + \norm{u_x}_\infty\norm{m}_{H^1}^2 \\
&\quad + \norm{m}_{H^1}\norm{\rho_x^2}_{L^2} + \norm{\rho}_\infty\norm{\rho}_{H^2}\norm{m}_{H^1},
\end{split}
\end{equation}
where we have  used the fact that $\HH{q}$ is a multiplicative algebra for $q>\frac{1}{2}$ thus ensuring
\[\int_{\Circle} u_{xx}mm_xdx \lesssim \norm{u}_{H^2}\norm{m}_{H^1}^2.\]
Moreover in line with the previous iteration, the Cauchy-Schwarz inequality ensures
\begin{equation}
    \int_{\Circle}\alpha u_{xx}m_xdx\lesssim\norm{u}_{H^2}\norm{m}_{H^1} \lesssim \norm{u}_{H^2}^2+\norm{m}_{H^1}^2 .
\end{equation}
Thus upon implementing the estimates
\begin{equation*}
\norm{u}_{H^2}\lesssim \norm{m}_{L^2},\quad \norm{u_x}_{\infty} \lesssim \norm{m}_{L^2},\quad
\norm{\rho}_{\infty}\lesssim\norm{\rho}_{H^1},\quad \norm{\rho_x^2}_{L^2} \lesssim\norm{\rho}_{H^1}\norm{\rho}_{H^2}
\end{equation*}
we obtain
\begin{equation}\label{s3:eq20}
\frac{d}{dt}\norm{m_x}^2 \lesssim \norm{m}_{L^2}^2+\left(1 + \norm{m}_{L^2}+\norm{\rho}_{H^1}\right)\left(\norm{m}_{H^1}^2 + \norm{\rho}_{H^2}^2\right).
\end{equation}
Meanwhile the term $\frac{d}{dt}\norm{\rho_{xx}}_{L^2}^2$ may be written according to
\begin{multline}\label{s3:eq21}
\frac{d}{dt}\norm{\rho_{xx}}_{L^2}^2 = \int_{\Circle}\rho_{xx}(-u\rho_x-(a-1)u_x\rho)_{xx}dx\\
= \left(a-\frac{1}{2}\right)\int_{\Circle}\left(u_{xxx}\rho_x^2-u_x\rho_{xx}^2\right)dx + (a-1)\int_{\Circle}u_{xxx}\rho\rho_{xx}dx.
\end{multline}
The Cauchy-Schwarz inequality in conjunction with the estimate $\norm{u}_{H^3}\lesssim\norm{m}_{H^1}$ yields
\begin{equation}\label{s3:eq22}
\begin{split}
\frac{d}{dt}\norm{\rho_{xx}}_{L^2}^2 & \lesssim \norm{u_x}_{\infty}\norm{\rho}_{H^2}^2 + \norm{m}_{H^1}\norm{\rho}_{\infty}\norm{\rho}_{H^2} + \norm{m}_{H^1}\norm{\rho_{x}^2}_{L^2} +  \norm{u_x}_\infty\norm{\rho}_{H^2}^2 \\
\end{split}
\end{equation}
which can be further simplified using $\norm{\rho_x^2}_{L^2}\lesssim\norm{\rho}_{H^1}\norm{\rho}_{H^2}$
allowing us to write
\begin{equation}\label{s3:eq23}
\begin{split}
\frac{d}{dt}\norm{\rho_{xx}}_{L^2}^2 &\lesssim (1+\norm{m}_{L^2}+ \norm{\rho}_{H^1})(\norm{m}_{H^1}^2+\norm{\rho}_{H^2}),
\end{split}
\end{equation}
having also used $\norm{\rho}_{\infty}\lesssim \norm{\rho}_{H^1}$ and $\norm{u_x}_{\infty}\lesssim\norm{m}_{L^2}$.
It follows that
\begin{equation}\label{s2eq24}
\frac{d}{dt}(\norm{m}_{H^1}^2+\norm{\rho}_{H^2}^2)\lesssim \norm{m}_{L^2}^2+(1+\norm{\rho}_{H^1}+\norm{m}_{L^2})(\norm{m}_{H^1}^2+\norm{\rho}_{H^2}^2),
\end{equation}
thus confirming $\norm{m}_{H^1}^2 + \norm{\rho}_{H^2}^2$ remains bounded in $J$ if $\norm{u_x}_\infty$ is bounded in $J.$

To demonstrate the  general case we must show that the quantity $\norm{m}_{H^k}^2 + \norm{\rho}_{H^{k+1}}^2$
is also bounded on any bounded subinterval of $J$ for all positive integers $k$, when $\norm{u_x}_{\infty}$ is bounded. To demonstrate this we use proof by induction and so want to show if the expression is bounded for some $k>1$ then the same is true for $k+1.$ We note

\begin{equation}\label{s3:eq25}
\begin{split}
	\frac{d}{dt}(\norm{m}_{H^{k+1}}^2 + \norm{\rho}_{H^{k+2}}^2) = \frac{d}{dt}(\norm{m}_{H^{k}}^2 + \norm{\rho}_{H^{k+1}}^2)+\frac{d}{dt}\int\left( (m^{(k+1)})^2  + (\rho^{(k+2)})^2\right)
\end{split}
\end{equation}
and so it is only required that we show the integral on the right hand side remains bounded on $J$ if $\norm{u_x}_\infty$ remains bounded on $J.$

We have
\begin{multline}\label{s3:eq26}
\int\left(m_{t}^{(k+1)}m^{(k+1)} + \rho_t^{(k+2)}\rho^{(k+2)}\right) =\\
\int m^{(k+1)}\left(\alpha u_x^{(k+1)} - a (u_xm)^{(k+1)} - (um_x)^{(k+1)}-\kappa(\rho\rho_x)^{(k+1)}\right)\\
-\int\rho^{(k+2)}\left((u\rho_x)^{(k+2)}+(a-1)(u_x\rho)^{(k+2)}\right).
\end{multline}
The Leibniz rule combined with the Cauchy-Schwarz inequality ensures
\begin{multline}\label{s3:eq27}
	\abs{\int(fg)^{(n+1)}h^{(n+1)}}\lesssim \norm{f}_{H^{n}}\norm{g}_{H^{n+1}}\norm{h}_{H^{n+1}}\\
                                                                        + \abs{\int fg^{(n+1)}h^{(n+1)}} + \abs{\int f^{(n+1)}gh^{(n+1)}},
\end{multline}
which may be applied to the above expression yielding
\begin{multline}\label{s3:eq28}
\frac{d}{dt}(\norm{m}_{H^{k+1}}^2+\norm{\rho}_{H^{k+2}}^2)
\lesssim (1+\norm{m}_{H^{k}}+\norm{\rho}_{H^{k+1}})(\norm{m}_{H^{k+1}}^2+\norm{\rho}_{H^{k+2}}^2)\\
+\int_{\Circle}\alpha m^{(k+1)}u_x^{(k+1)}dx.
\end{multline}
The Cauchy-Schwarz inequality applied to the remaining integral provides the estimate
\begin{equation}\label{s3:eq29}
    \int_{\Circle}\alpha m^{(k+1)}u_x^{(k+1)}dx\lesssim \norm{m}_{H^{k+1}}\norm{u}_{H^{k+2}}\lesssim \norm{m}_{H^{k+1}}^2 + \norm{m}_{H^k}^2,
\end{equation}
having used $\norm{u}_{H^{k+2}}\leq \norm{m}_{H^k}.$
It follows at once that we may write
\begin{equation}\label{s3:eq30}
\begin{split}
\frac{d}{dt}(\norm{m}_{H^{k+1}}^2+\norm{\rho}_{H^{k+2}}^2)\lesssim\norm{m}_{H^k}^2+(1+\norm{m}_{H^k}+\norm{\rho}_{H^{k+1}})(\norm{m}_{H^{k+1}}^2+\norm{\rho}_{H^{k+2}}^2)
\end{split}
\end{equation}
and with Gronwall's Lemma this ensures the quantity $\norm{m}_{H^{k+1}}^2 + \norm{\rho}_{H^{k+2}}^2$ remains bounded, thus Lemma \ref{s3:lem2} has been verified.
\end{proof}

\subsection{Proof of Theorem \ref{thm:gc}}\label{Sec3.2}
\begin{proof}
Let $u_0,\ \rho_0\in\CS$ be given, and denote by
\begin{equation*}
    (u,\rho)\in \mathrm{C}^{\infty}(J,\CS\oplus\CS)
\end{equation*}
the unique non-extendable solution to \eqref{s1:eq1} emerging from $(u_0,\rho_0)$. Assume that $J$ is bounded and write $T^+:=\sup J$. Then there must exists a $k_0\in\mathbb{N}$ such that
\begin{equation}\label{eq:blowup}
    \limsup_{t\to T^+}\Vert u(t)\Vert_{H^{k_0}}=\infty\qquad\text{or}\qquad\limsup_{t\to T^+}\Vert \rho(t)\Vert_{H^{k_0}}=\infty.
\end{equation}
Recalling that by assumption $\kappa\ge 0$ and $a=2$, Remark \ref{rm:3} in conjunction with \eqref{eq:aprio} and \eqref{s3:eq11} implies that $\Vert u(t)\Vert_{L_\infty}$ is bounded on $J$. Invoking Lemma \ref{s3:lem2} this contradicts \eqref{eq:blowup}.
\end{proof}

\section*{Acknowledgements}
Both authors are grateful to Boris Kolev for several helpful discussions. The authors would like to thank the referees for several helpful comments.

\end{document}